	\newcommand{\R}{\mathbb R} 
	\newcommand{\one}{\operatorname{\mathbbm{1}}}
	\newcommand{\om}{\operatorname{\rm Z}} 
	\newcommand{\omd}{\operatorname{{\rm Z}}^*\!}
	\newcommand{\rbd}{\partial^*} 
	\newcommand{\diam}{\operatorname{diam}}
	\newcommand{\un}{\nu}
	\newcommand{\sn}{{S^{n-1}}}
	\newcommand{\affn}{\operatorname{Aff}(n,1)}
	\newcommand{\rn}{{\R^{n}}}
	\newcommand{\card}{H^0}
	\newcommand{\BVZK}[1]{\|#1\|_{BV,\, \om K}}
	\providecommand{\norm}[1]{\lVert#1\rVert}
	\providecommand{\bnorm}[1]{\Big\lVert#1\Big\rVert}
	\providecommand{\abs}[1]{\lvert#1\rvert}
	\providecommand{\babs}[1]{\Big\lvert#1\Big\rvert}
	\providecommand{\Vol}[1]{\lvert#1\rvert}
	\newcommand{\eqnref}[1]{(\ref{#1})}
	\newtheorem{theorem}{Theorem}
	\newtheorem{lemma}[theorem]{Lemma}
	\newtheorem{corollary}[theorem]{Corollary}
\title{Anisotropic Fractional Perimeters}
\author{Monika Ludwig\footnote{The work of the author was supported, in part, by Austrian Science Fund (FWF) Project P23639-N18.}}
\date{}
\begin{document}

\maketitle
\begin{abstract}
The anisotropic fractional $s$-perimeter with respect to a convex body $K$ in $\rn$ is shown to converge as $s\to 1^-$ to the anisotropic perimeter with respect to the moment body of $K$. 
For anisotropic fractional {$s$-seminorms} on $BV(\rn)$, the corresponding result is established (generalizing  results of  Bourgain, Brezis \& Mironescu and D\'avila).  Minimizers of the anisotropic fractional {iso\-perimetric} inequality with respect to $K$ are shown to converge to the moment body of $K$ as $s\to 1^-$.  Anisotropic fractional Sobolev inequalities are established.
\end{abstract}

\bigskip

For a Borel set $E\subset \rn$ and $0<s<1$, the fractional $s$-perimeter of $E$ is given by
$$P_s(E) = \int_E\int_{E^c} \frac1 {\abs{x-y}^{n+s}} \,dx\,dy,$$
where $E^c$ denotes the complement of $E$ in $\rn$ and $\abs{\,\cdot\,}$ the Euclidean norm on $\rn$.
Fractional peri\-meters are closely related to fractional Sobolev seminorms (see Sections \ref{norms} and \ref{sobineq}). 
On Borel sets in $\rn$, the functional $P_s$ is an $(n -s)$-dimensional perimeter, as $P_s(\lambda E) = \lambda^{n -s} P_s(E)$ for $\lambda > 0$. It is non-local in the sense that it is not determined by the behavior of $E$ in a neighborhood of $\partial E$. 
Fractional $s$-perimeters have attracted increased attention in recent years (see \cite{AmbrosioDePhilippisMartinazzi, BourgainBrezisMironescu02, Visintin_siam, FuscoMillotMorini,FrankSeiringer,  CaffarelliRoquejoffreSavin, CaffarelliValdinoci, DipierroFigalliPalatucciValdinoci} and the references therein). 

The limiting behavior of fractional $s$-perimeters as $s\to 1^-$ and  as $s\to 0^+$ turns out to be very interesting. 
A result by D\'avila \cite{Davila}, which extends results by Bourgain, Brezis \& Mironescu \cite{BourgainBrezisMironescu},  shows that for a bounded Borel set $E\subset\rn$ of finite perimeter,
\begin{equation}\label{davila}
\lim_{s\to1^-} (1-s)  P_s(E) = \alpha_{n} P(E),
\end{equation}
where $P(E)$ is the perimeter of $E$ and $\alpha_n$ is a constant depending on $n$. The peri\-meter $P(E)$ coincides with the $(n - 1)$-dimensional Hausdorff measure of $\partial E$ when $E$ has  smooth boundary. If  $E$ is a Borel set of
finite Lebesgue measure, then $E$ is of finite perimeter if its characteristic function $\one_E$ is in $BV(\rn)$ and 
 then $P(E)$ is the total variation of the weak gradient of $\one_E$. We refer to \cite{AmbrosioFuscoPallara, Maggi} for the basic
properties of sets of finite perimeter and note that
\begin{equation}\label{peri}
P(E) =\int_{\rbd E} \abs{\nu_E(x)} \,dH^{n-1}(x),
\end{equation}
where $H^{n-1}$ denotes $(n-1)$-dimensional Hausdorff measure, $\rbd E$  the reduced boundary of $E$ and 
$\un_E(x)$ the measure theoretic outer unit normal  of $E$ at $x\in\rbd E$. If $E$ has  smooth boundary, then $\rbd E$ is just the topological boundary, $\partial E$, of $E$ and $\un_E(x)$  is the usual outer unit normal vector of $E$ at $x\in\partial E$.

The limiting behavior for $s\to 0^+$ of fractional Sobolev $s$-seminorms was determined by Maz$'$ya \& Shaposhnikova \cite{MazyaShaposhnikova}. Their result implies that 
\begin{equation}\label{mazyashaposhnikova}
\lim_{s\to 0+}s\,P_s(E)= n\, \Vol{B} \,\Vol{E},
\end{equation}
for every bounded Borel set $E\subset \rn$ of finite fractional $s'$-perimeter for all $s'\in(0,1)$. Here $B$ is the Euclidean unit ball and $\Vol{\,\cdot\,}$ is the $n$-dimensional Lebesgue measure.  See Dipierro, Figalli, Palatucci \& Valdinoci \cite{DipierroFigalliPalatucciValdinoci} for a detailed study of the limiting behavior in this case.

Anisotropic perimeter is  a natural generalization of the Euclidean notion of perimeter obtained by replacing the Euclidean norm $\abs{\,\cdot\,}$ in \eqnref{peri} by an arbitrary norm $\norm{\,\cdot\,}_L$ with unit ball $L$. We say that a set  $K\subset\rn$ is a convex body if it is  compact  and convex and has non-empty interior.  For $K\subset \rn$ an origin-symmetric convex body, the anisotropic perimeter of a Borel set $E\subset \rn$ with respect to $K$ is 
$$P(E, K) = \int_{\rbd E} \norm{\nu_E(x)}_{K^*} \,dx,$$
where $K^*=\{v\in\rn:v\cdot x\le1 \hbox{ for all } x\in K\}$ is the polar body of $K$. 
If $E$ is a convex body, then $P(E,K)$ is equal (up to a factor $n$) to the classical first mixed volume of $E$ and $K$ (cf.~\cite{Gruber, Schneider:CB}). Anisotropic perimeters are important as a model for surface tension in the study of equilibrium configurations of crystals  and constitute the basic model
for surface energies in phase transitions (cf.  \cite{FigalliMaggiPratelli10} and the references therein).
Anisotropic perimeters correspond to anisotropic  Sobolev seminorms which have been studied in numerous papers (cf.\ \cite{AlvinoFeroneTrombettiLions,Cordero:Nazaret:Villani, Gromov_Sobolev, FigalliMaggiPratelli} and the references therein).

For a Borel set $E\subset \rn$, an origin-symmetric convex body $K\subset\rn$  and $0<s<1$, the anisotropic fractional $s$-perimeter of $E$ with respect to $K$ is given by
$$P_s(E, K) = \int_E\int_{E^c} \frac1 {\norm{x-y}_K^{n+s}}\, dx\,dy,$$
where $\norm{\,\cdot\,}_K$ denotes the norm with unit ball $K$.

A natural question is to study the limiting behavior of anisotropic $s$-perimeters as $s\to 1^-$ and $s\to0^+$.  While one might suspect  that the limit as $s\to 1^-$ of anisotropic $s$-perimeters with respect to the origin-symmetric convex body $K$ is the anisotropic perimeter with respect to the same convex body, this turns out not to be true in general.
In Section~\ref{lb}, we show that for $E\subset\rn$ a bounded Borel set of finite perimeter,
$$\lim_{s\to 1^-} (1-s) P_s(E,K) = P(E, \om K).$$
Here the convex body $\om K$ is the moment body of $K$, that is, the convex body such that for $v\in\rn$,
\begin{equation}\label{moment}
\norm{v}_{\omd K} =  \frac{n+1}2 \,\int_K \abs{v\cdot x}\,dx,
\end{equation}
where $\omd K$ is the polar body of  $\om K$. For the Euclidean unit ball $B$, the convex body $\om B$ is just a multiple of  $B$. Hence we  recover \eqnref{davila}  including the value of the constant $\alpha_{n}$.

The moment body is closely related to the classical centroid body of $K$, which is defined as 
$$\frac2{(n+1) \Vol{K}}\, \om K.$$
If we intersect the origin-symmetric convex body $K$ by all halfspaces ortho\-gonal to $u\in\sn$, then the centroids of these intersections  trace out the boundary of twice the centroid body of $K$, which explains the name centroid body. The name moment body comes from the fact that the moment vectors of these halfspaces trace out the boundary (of a constant multiple) of $\om K$. Centroid bodies play an important role in the geometry of convex bodies (cf.\ \cite{Gardner, Lutwak90}) and moment bodies in the theory of valuations on convex bodies (see \cite{Ludwig:Minkowski, Ludwig:convex, Haberl_sln}). In recent years, centroid bodies have found powerful extensions within the $L^p$~Brunn Minkowski
 theory \cite{LZ1997, LYZ2000, LYZ2000b, LYZ2002}, the asymmetric $L^p$~Brunn Minkowski theory  \cite{Haberl:Schuster1, Ludwig:Minkowski} and the Orlicz Brunn Minkowski theory \cite{LYZ2010a}.

In Section \ref{lb}, we  show that for $E\subset\rn$ a bounded Borel set of finite perimeter,
$$\lim_{s\to 0^+} s\,P_s(E, K) = n\, \Vol{K}\, \Vol{E}.$$
The special case when $K$ is the Euclidean unit ball follows from the result \eqnref{mazyashaposhnikova} by Maz$'$ya \& Shaposhnikova. The limiting results for $s\to1^-$ and $s\to 0^+$  for the anisotropic $s$-perimeters are both obtained by using the Blaschke-Petkantschin Formula from integral geometry and results on fractional perimeters for subsets of the real line.

One of the most important results for Euclidean $s$-perimeters is the Euclidean fractional isoperimetric inequality.
For a bounded Borel set $E\subset \rn$,
\begin{equation}\label{fraciso}
P_s(E) \ge \gamma_{n,s}\, \Vol{E}^{\frac{n-s}n}, 
\end{equation}
where $\Vol{E}$ is the $n$-dimensional Lebesgue measure of $E$ and  $\gamma_{n,s}>0$ is a constant depending on $n$ and $s$.  Using a symmetrization result by Almgren \& Lieb \cite{AlmgrenLieb}, Frank \& Seiringer~\cite{FrankSeiringer}  proved that there is equality in \eqnref{fraciso} precisely for balls (up to sets of measure zero). A stability version was recently established by Fusco, Millot \& Morini \cite{FuscoMillotMorini}.  

While it is not difficult to see that for a given origin-symmetric convex body $K$,
there is an optimal constant $\gamma_{s}(K)>0$ such that
\begin{equation}\label{afraciso}
P_s(E, K) \ge \gamma_{s}(K)\, \Vol{E}^{\frac{n-s}n}
\end{equation}
for every bounded Borel set $E\subset \rn$, it turns out that the determination of the minimizers is more challenging and remains  open. Inequality \eqnref{afraciso}  is the anisotropic fractional isoperimetric  inequality. 
In Section~\ref{isoperi}, we show that if minimizers of \eqnref{afraciso} converge to a bounded Borel set $E_1$ as $s\to 1^-$, then $E_1$ is (up to a constant factor) the moment body of $K$.

In the last  sections, we establish  analogues of the results on anisotropic fractional perimeters in the setting of fractional Sobolev spaces. We prove results on the limiting behavior of anisotropic fractional Sobolev seminorms on $BV(\rn)$ and  anisotropic fractional Sobolev inequalities with the sharp constants from \eqnref{afraciso}.

\section{Preliminaries}\label{tools}

We state the Blaschke-Petkantschin Formula (cf.~\cite[Theorem~7.2.7]{SchneiderWeil}) in the case in which it will be used. Let $H^k$ denote the $k$-dimensional Hausdorff measure on $\rn$ and $\affn$ the affine Grassmannian of lines in $\rn$. 
 If $g:\rn\times\rn\to [0,\infty)$ is measurable,
then
\begin{equation}\label{BP}
\begin{array}{l}
\displaystyle\int_\rn\int_\rn g(x,y)\,dH^n(x)\,dH^n(y) \\[12pt]
= 
\displaystyle\int_{\affn} \int_L \int_L g(x,y)\,\abs{x-y}^{n-1}\,dH^1(x)\,dH^1(y)\,dL,
\end{array}
\end{equation}
where $dL$ denotes integration with respect to a suitably normalized, rigid motion invariant Haar measure on $\affn$. This measure can be described in the following way. Any line $L\in\affn$ is parameterized using one of its direction normal vectors $u=u(L)\in\sn$ and its base point $x\in u^\bot$, where $u^\bot$ is the hyperplane orthogonal to $u$, as $L=\{x+\lambda \, u(L): \lambda\in\R\}$. For $h:\affn\to[0,\infty)$ measurable,
\begin{equation}\label{ag}
\int_{\affn} h(L)\,dL= \frac 12 \int_\sn \int_{u^\bot} h(x+L_u)\,dH^{n-1}(x)\,dH^{n-1}(u),
\end{equation}
where $L_u=\{\lambda u:\lambda\in\R\}$.

If $E\subset \rn$ has finite perimeter, then De Giorgi's Structure Theorem (cf.\ \cite[Theorem~15.9]{Maggi})  implies that the reduced boundary, $\rbd E$,  of $E$  is $H^{n-1}$-rectifiable. Hence Theorem~1 of Wieacker \cite{Wieacker84} gives the following:
If $E\subset \rn$ has finite perimeter, then
\begin{equation}\label{proj}
\int_{\rbd E} \abs{u\cdot \un_E(x)}\, dH^{n-1}(x) = \int_{E|u^\bot} \card( \rbd E \cap (y+ L_u))\, dH^{n-1}(y)
\end{equation}
for all $u\in\sn$, where $L_u$ is the line with direction vector $u$. Wieacker \cite{Wieacker84} used the right-hand side of \eqnref{proj} to define the support function of the projection body of $\rbd E$. We remark that Tuo Wang \cite{Tuo_Wang} has defined the projection body of the set $E$ using the left hand side of \eqnref{proj} and obtained the Petty projection inequality for sets of finite perimeter (generalizing a result of Gaoyong Zhang \cite{Zhang99}).

Let $\Omega\subset \rn$ be an open bounded set and let $0<s<1$. We set
\begin{equation}\label{fracisog}
\gamma_{s}(K) =\inf\{  P_s(E, K) \,\Vol{E}^{-\frac{n-s}n}: E\subset \Omega, \Vol{E}>0\}.
\end{equation}
Let $c_1, c_2>0$ be chosen such that $c_1 \le \norm{u}_K \le c_2$  for all $u\in \sn$. Then 
$$c_2^{-(n+s)} P_s(E) \le P_s(E, K) \le c_1^{-(n+s)}P_s(E).$$
Since the optimal constant in the Euclidean fractional $s$-isoperimetric inequality \eqnref{fraciso} is positive, we see that $0<\gamma_s(K)<\infty$ for all origin-symmetric convex bodies $K$. 
Let $E_i\subset \Omega$ be Borel sets such that 
$$\gamma_{s}(K) =\lim_{i\to\infty} P_s(E_i, K) \,\Vol{E_i}^{-\frac{n-s}n}.$$
It follows from 
\cite[(4)]{AmbrosioDePhilippisMartinazzi} (and hence from the Fr\'echet-Kolmogorov compactness criterion) that 
the sequence $E_i$ is pre-compact. In particular, the infimum in \eqnref{fracisog}
is attained. Note that the homogeneity of $P_s(\cdot,K)$ implies that $\gamma_s(K)$ does not depend on the choice of the open bounded set $\Omega$.

Let $E\subset \rn$ be a Lebesgue measurable  set  with $\Vol{E}<\infty$ and $K\subset\rn$ a convex body. The anisotropic isoperimetric  inequality, also called  generalized Minkowski inequality or Wulff inequality, states that
\begin{equation}\label{minkowski}
P(E,K) \ge n \,\Vol{K}^{\frac1n}\,\Vol{E}^{\frac{n-1}n}
\end{equation}
with equality if and only if $E$ is homothetic to $K$ (up to a set of measure zero). If $E$ is a convex body, \eqnref{minkowski} is the classical Minkowski inequality (cf.~\cite{Gruber, Schneider:CB}). For general $E$, inequality \eqnref{minkowski} including the equality case is due to Taylor \cite{Taylor1978}. A quantitative version was recently established by Figalli, Maggi \& Pratelli \cite{FigalliMaggiPratelli10}.

\section{Fractional Perimeters on the Real Line}\label{dimone}

In next lemma, the one-dimensional case of \eqnref{davila} is proved together with estimates, which are used in the proof of Theorem \ref{perimeter}. For a set $A\subset \rn$, let $\diam(A) =\sup\{\abs{x-y}: x\in A, y\in A\}$ denote the (Euclidean) diameter of $A$.

\begin{lemma}\label{one} If $A\subset \R$ is a bounded Borel set of finite perimeter, then
\begin{equation}\label{conv}
\lim_{s\to 1^-}(1-s) P_s(A)= \card(\rbd A)
\end{equation}
and
\begin{equation}\label{domconv}
(1-s) P_s(A) \le 8\,\card(\rbd A) \max\{1,\diam(A)\}
\end{equation}
for $\,1/2\le s<1$.
\end{lemma}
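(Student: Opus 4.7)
\medskip
\noindent\textbf{Proof plan.} The plan is to reduce to the case of finite unions of open intervals and then compute the single-interval case explicitly.

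\emph{Step 1 (Reduction to finite unions of intervals).} Since $A\subset\R$ is a bounded Borel set of finite perimeter, $\one_A \in BV(\R)$ and $D\one_A$ is a signed measure with finite total variation supported on a finite set. Consequently, up to a Lebesgue null set, $A$ is a disjoint union of finitely many open intervals $I_1=(a_1,b_1),\dots,I_m=(a_m,b_m)$ with $b_i<a_{i+1}$, and $\card(\rbd A)=2m$. Modifying $A$ on a null set does not change $P_s(A)$, so I may work with this representative.

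\emph{Step 2 (Single interval computation).} For a single bounded interval $I=(a,b)$, the integral splits as two symmetric pieces $\int_a^b\int_{-\infty}^a$ and $\int_a^b\int_b^\infty$. Direct evaluation (integrate the inner integral, then the outer) gives
\begin{equation*}
P_s(I) \;=\; \frac{2}{s(1-s)}(b-a)^{1-s}.
\end{equation*}
Hence $(1-s)P_s(I)=\frac{2}{s}(b-a)^{1-s}\to 2=\card(\rbd I)$ as $s\to 1^-$, and for $1/2\le s<1$ one has $(1-s)P_s(I)\le 4(b-a)^{1-s}\le 4\max\{1,\diam(A)\}$, using $(b-a)^{1-s}\le 1$ when $b-a\le 1$ and $(b-a)^{1-s}\le b-a\le\diam(A)$ otherwise.

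\emph{Step 3 (Decomposition for unions).} Writing $A^c = I_i^c \setminus \bigcup_{j\neq i}I_j$ for each $i$ and summing, one obtains the identity
\begin{equation*}
P_s(A) \;=\; \sum_{i=1}^m P_s(I_i) \;-\; 2\sum_{1\le i<j\le m}\int_{I_i}\!\int_{I_j}\frac{dx\,dy}{\abs{x-y}^{1+s}}.
\end{equation*}
The cross terms are nonnegative, so dropping them gives the pointwise bound $P_s(A)\le \sum_i P_s(I_i)$, and combining this with Step 2 yields
\begin{equation*}
(1-s)P_s(A) \;\le\; \sum_{i=1}^m 4\max\{1,\diam(A)\}\;=\;2\,\card(\rbd A)\max\{1,\diam(A)\},
\end{equation*}
which is stronger than the claimed bound \eqnref{domconv} (the constant $8$ leaves room to spare).

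\emph{Step 4 (Passage to the limit).} For distinct $i<j$, the closures of $I_i$ and $I_j$ are disjoint, so $\abs{x-y}$ is bounded below by a positive constant $\delta_{ij}>0$ on $I_i\times I_j$; hence the double integral is bounded uniformly in $s\in[1/2,1)$ by $\abs{I_i}\abs{I_j}/\delta_{ij}^{1+s}$, and multiplying by $(1-s)$ sends it to $0$. Combined with $(1-s)P_s(I_i)\to 2$ from Step 2, the identity of Step 3 yields
\begin{equation*}
\lim_{s\to 1^-}(1-s)P_s(A)\;=\;\sum_{i=1}^m 2\;=\;2m\;=\;\card(\rbd A),
\end{equation*}
which is \eqnref{conv}. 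There is no serious obstacle here; the only care needed is Step 1 (the measure-theoretic reduction to a disjoint union of intervals with separated closures) and the Fubini-based explicit integration in Step 2.
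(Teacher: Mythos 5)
Your proof is correct. It follows the paper's overall strategy (reduce to a finite disjoint union of intervals at mutually positive distance, then do explicit one-dimensional integrals), but the decomposition is genuinely different. The paper splits the outer integral over $A^c$ into the gap intervals $J_j=(b_j,a_{j+1})$ and the two unbounded tails, and computes the limiting contribution of each piece directly ($1$ for each tail, $2$ for each interior gap, summing to $2m$). You instead use an inclusion--exclusion identity
$P_s(A)=\sum_i P_s(I_i)-2\sum_{i<j}\int_{I_i}\int_{I_j}\abs{x-y}^{-(1+s)}\,dx\,dy$,
which is legitimate since all terms are finite for $0<s<1$, and then exploit the closed form $P_s((a,b))=\tfrac{2}{s(1-s)}(b-a)^{1-s}$ together with the observation that the cross terms stay bounded (the integrands are bounded on $I_i\times I_j$ because the closures are at positive distance), so they vanish after multiplication by $1-s$. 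What your route buys: the limit argument becomes almost trivial (no need to analyze the interior gaps at all), and dropping the nonnegative cross terms gives the uniform bound with constant $2$ rather than $8$, which is more than enough for the dominated-convergence application in Theorem \ref{perimeter}. What the paper's route buys is symmetry with the proof of Lemma \ref{two}, where the same gap decomposition is reused for the $s\to 0^+$ limit. One cosmetic remark: in Step 1, the relevant fact is not that $D\one_A$ has finite total variation (that alone does not force finite support) but the one-dimensional structure theorem for sets of finite perimeter, which is exactly what the paper cites; your conclusion is right, only the stated reason is slightly off.
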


\begin{proof} Since $A$ has finite perimeter, it is up to a set of measure zero the disjoint union of finitely many intervals lying at mutually positive distance (cf.~\cite[Proposition 12.13]{Maggi}). Also note that $\rbd A$  is not changed by changing $A$ on a set of measure zero (cf.~\cite[Remark 15.2]{Maggi}). 
Hence, w.l.o.g., we write $A= \bigcup_{i=1}^m I_i$, where $I_i=(a_i,b_i)$. Set $J_j= (b_j, a_{j+1})$ for $j=1,\dots, m-1$ and $J_0=(-\infty, a_1)$ and $J_m=(b_m,\infty)$. Hence
\begin{equation}\label{sum}
P_s(A)= \sum_{j=0}^m\, \int_{J_j} \int_A  \frac{1}{\abs{x-y}^{s+1}}  \, dx\,dy .
\end{equation}
A simple calculation shows that
\begin{equation*}
\lim_{s\to1^-} (1-s) \int_{J_j} \int_A  \frac{1}{\abs{x-y}^{s+1}}  \, dx\,dy =1
\end{equation*}
for $j=0$ and $j=m$ and
\begin{equation*}
\lim_{s\to1^-} (1-s) \int_{J_j} \int_A  \frac{1}{\abs{x-y}^{s+1}}  \, dx\,dy =2
\end{equation*}
for $j=1, \dots, m-1$.
Since $\card(\rbd A)= 2\,m$, we obtain \eqnref{conv} from \eqnref{sum}.

We have
\begin{eqnarray*}
 \int_{J_0} \int_A  \frac{1}{ \abs{x-y}^{s+1}}  \, dx\,dy &\le& 
\int_{-\infty}^{a_1} \int_{a_1}^{b_m} \frac{1}{(x-y)^{s+1}}  \, dx\,dy \\
&\le& \frac2{1-s} \,\max\{1,\diam(A)\}
\end{eqnarray*}
and similarly
$$\int_{J_m} \int_A  \frac{1}{\abs{x-y}^{s+1}}  \, dx\,dy \le\frac2{1-s} \,\max\{1,\diam(A)\}.$$
For $j=1, \dots, m-1$, 
we have
\begin{eqnarray*}
 \int_{J_j} \int_A  \frac{1}{\abs{x-y}^{s+1}}  \, dx\,dy &\le& 
  \int_{a_1}^{b_j} \int_{b_j}^{a_{j+1}}  \frac{1}{(y-x)^{s+1}}  \, dy\,dx \\
&&+ \int_{a_{j+1}}^{b_m} \int_{b_j}^{a_{j+1}}  \frac{1}{(x-y)^{s+1}}  \, dy\,dx \\
&\le& \frac{8}{s(1-s)} \max\{1,\diam(A)\}.
\end{eqnarray*}
Hence we obtain \eqnref{domconv} from \eqnref{sum} by combining these estimates.
\end{proof}

A sequence of Borel sets $E_i\subset \rn$ converges to a Borel set $E\subset \rn$  if  $\one_{E_i}\to \one_E$ in $L^1(\rn)$, where $\one_E$ denotes the indicator function of $E$. The following lemma is the one-dimensional case of  \cite[Lemma 7]{AmbrosioDePhilippisMartinazzi} combined with the one-dimensional case of  \cite[Lemma 9]{AmbrosioDePhilippisMartinazzi} by Ambrosio, De Philippis \& Martinazzi.

\begin{lemma}\label{gamma} If $s_i\to 1^-$ and $A_i, A\subset \R$ are bounded Borel sets, then
\begin{equation*}
\liminf_{i\to \infty}(1-s_i) P_{s_i}(A_i)\ge \card(\rbd A)
\end{equation*}
for $A_i\to A$.
\end{lemma}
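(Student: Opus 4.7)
The plan is to reduce the claim to a sum of localized estimates near each point of $\rbd A$ and then invoke the one-dimensional case of the $\Gamma$-liminf results of \cite{AmbrosioDePhilippisMartinazzi}. If $\card(\rbd A) = \infty$, I would handle that by choosing, for any fixed $N$, a finite union of intervals $\tilde A$ with $\card(\rbd \tilde A)\ge N$ such that the argument below gives $\liminf \ge N$; letting $N \to \infty$ then forces $\liminf = \infty$. Hence assume $A$ coincides, up to a null set, with $\bigcup_{k=1}^m (a_k,b_k)$, and list the $2m$ distinct boundary points as $p_1<\cdots<p_{2m}$.

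Next, choose $\delta>0$ smaller than a quarter of the minimum gap $p_{k+1}-p_k$, and let $I_k=(p_k-\delta,p_k+\delta)$ be mutually disjoint open neighborhoods. Discarding pairs $(x,y)$ outside $\bigcup_k I_k\times I_k$ yields the subadditivity-type bound
$$P_{s_i}(A_i) \;\ge\; \sum_{k=1}^{2m}\, \int_{A_i \cap I_k}\int_{A_i^c \cap I_k} \frac{1}{\abs{x-y}^{s_i+1}}\,dx\,dy,$$
so it suffices to show for each $k$ that
$$\liminf_{i\to\infty}(1-s_i)\int_{A_i \cap I_k}\int_{A_i^c \cap I_k} \frac{1}{\abs{x-y}^{s_i+1}}\,dx\,dy \;\ge\; 1.$$
Summing over $k=1,\ldots,2m$ then delivers $\liminf (1-s_i)P_{s_i}(A_i)\ge 2m=\card(\rbd A)$.

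The localized claim is the main obstacle. Setting $B_i:=A_i\cap I_k$ and $B:=A\cap I_k$, note that $B$ is, up to a null set, a half-interval in $I_k$ with the single interior boundary point $p_k$, so Lemma~\ref{one} applied to the fixed set $B$ yields $\lim_{s\to 1^-}(1-s)P_s(B)=1$. A Fatou-style argument on the identity $P_s(E)=\int_0^\infty |E\triangle (E-h)|\, h^{-s-1}\,dh$ applied to $B_i$ is not enough: although $|B_i\triangle (B_i-h)|\to |B\triangle (B-h)|$ uniformly in $h$ with rate $2|B_i\triangle B|$, the prefactor $(1-s_i)$ annihilates pointwise-in-$h$ liminfs, while all the relevant mass sits in a shrinking neighborhood of $h=0$ whose size must be coupled to $s_i$ in a delicate way. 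This is precisely the coupling resolved in \cite[Lemmas~7 and~9]{AmbrosioDePhilippisMartinazzi}: Lemma~7 provides a smooth-case lower semicontinuity through mollification and a test-function argument, and Lemma~9 promotes it to BV characteristic functions via density. Specialized to one dimension these statements yield the per-neighborhood inequality above, completing the plan.
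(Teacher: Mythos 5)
Your argument is correct and rests on exactly the same key input as the paper: the paper gives no independent proof of this lemma but simply observes that it \emph{is} the one-dimensional case of Lemmas 7 and 9 of Ambrosio, De Philippis \& Martinazzi, which is precisely what you invoke for your localized claim. Your decomposition into neighborhoods of the boundary points is a harmless but unnecessary extra layer, since the cited $\Gamma$-liminf inequality can be applied directly to $A_i\to A$ on all of $\R$ (covering the infinite-perimeter case as well).
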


The following lemma contains the one-dimensional case of \eqnref{mazyashaposhnikova} for sets of finite perimeter and some estimates. It follows from \eqnref{mazyashaposhnikova} that \eqnref{conv0} also holds for bounded Borel sets of finite $s'$-perimeter for all $s'\in (0,1)$.

\begin{lemma}\label{two} If $A\subset \R$ is a bounded Borel set of finite perimeter, then
\begin{equation}\label{conv0}
\lim_{s\to 0^+}s\, P_s(A)= 2\,\Vol{A}
\end{equation}
and
\begin{equation}\label{domconv0}
P_s(A) \le  \frac{4}s\,\max\{1,\diam(A)\} +\diam(A)^2 +P_{s'}(A)
\end{equation}
for $\,0<s<s'< 1/2$. 
\end{lemma}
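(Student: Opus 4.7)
The plan is to reuse the decomposition from the proof of Lemma~\ref{one}: writing $A = \bigcup_{i=1}^m (a_i,b_i)$ up to a null set and setting $J_0 = (-\infty, a_1)$, $J_m = (b_m, \infty)$, and $J_j = (b_j, a_{j+1})$ for $1 \le j \le m-1$, we have
\[
P_s(A) = \sum_{j=0}^{m}\int_{J_j}\int_A \frac{1}{|x-y|^{s+1}}\,dx\,dy,
\]
and both assertions reduce to analyzing the $m+1$ summands.

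For \eqnref{conv0}, I would evaluate each summand in closed form. For $j = 0$, integrating first in $y\in(-\infty, a_1)$ and then in $x \in A$ produces $\frac{1}{s(1-s)}\sum_{i=1}^{m}\bigl[(b_i - a_1)^{1-s} - (a_i - a_1)^{1-s}\bigr]$, whose numerator equals $|A|$ at $s=0$; the Taylor expansion $r^{1-s} = r - s\,r\log r + O(s^2)$ identifies the leading behavior as $|A|/(s(1-s))$, so $s$ times this summand tends to $|A|$, and $j = m$ is handled symmetrically. For each bounded summand $j = 1, \dots, m-1$ the same explicit evaluation yields an expression with an apparent $0/0$ at $s = 0$, and the same Taylor expansion shows the numerator vanishes to first order, so the summand is $O(1)$ in $s$ and $s$ times it drops out in the limit. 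Adding the contributions yields $2|A|$, which is \eqnref{conv0}.

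For \eqnref{domconv0}, I would treat the unbounded and bounded pieces separately. For $j = 0$, the crude bound $\int_{J_0}\int_A \le \int_{-\infty}^{a_1}\int_{a_1}^{b_m}(x-y)^{-s-1}\,dx\,dy$ evaluates to $\diam(A)^{1-s}/(s(1-s))$, which for $s \le 1/2$ is at most $2\max\{1,\diam(A)\}/s$; the case $j = m$ is symmetric and together they produce the $4/s$ term. For each bounded $J_j$ I split the integrand according to whether $|x-y| > 1$ or $|x-y| \le 1$: on the far region $|x-y|^{-s-1} \le 1$, and summing over $j$ bounds the contribution by $\sum_j |J_j|\cdot|A|\le \diam(A)^2$; on the near region the inequality $|x-y|^{-s-1}\le |x-y|^{-s'-1}$, valid precisely because $s < s'$ and $|x-y|\le 1$, dominates the contribution, and summing over $j$ produces at most $P_{s'}(A)$. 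Combining these estimates yields \eqnref{domconv0}. The only conceptually delicate point of the whole argument is the $0/0$ indeterminacy that appears in the bounded summands for \eqnref{conv0}; it is resolved by the linear expansion of $r\mapsto r^{1-s}$ at $s = 0$, with everything else being elementary integration.
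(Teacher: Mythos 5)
Your proposal is correct and follows essentially the same route as the paper: the crude bound $\int_{-\infty}^{a}\int_{a}^{b}(x-y)^{-s-1}\,dx\,dy=\diam(A)^{1-s}/(s(1-s))\le 2\max\{1,\diam(A)\}/s$ for the two unbounded pieces, the split of the bounded part (your $\bigcup_j J_j$ is the paper's $C=A^c\cap(a,b)$) according to $\abs{x-y}\ge 1$ versus $\abs{x-y}<1$ for \eqnref{domconv0}, and the interval decomposition with termwise evaluation for \eqnref{conv0}. You merely make explicit, via the expansion of $r\mapsto r^{1-s}$, the ``simple calculation'' that the paper leaves to the reader.
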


\begin{proof} 
Let $a=\inf A$ and $b=\sup A$. First, note that
$$\int_{-\infty}^{a} \int_{a}^{b} \frac1{\abs{x-y}^{1+s}}\,dx\,dy =  \frac{(b-a)^{1-s}}{s(1-s)}\le \frac 2 s \max\{1,\diam(A)\}$$
and
$$\int_{b}^{\infty} \int_{a}^{b} \frac1{\abs{x-y}^{1+s}}\,dx\,dy\le \frac 2 s \max\{1,\diam(A)\}.$$
Let $C= A^c \cap (a,b)$. Note that
$$\int\hskip -30pt\int\limits_{\{\abs{x-y}\ge 1\} \cap (A\times C)}  \frac1{\abs{x-y}^{1+s}}\,dx\,dy\,\, \le\,\, \diam(A)^2$$
and
$$\int\hskip -30pt\int\limits_{\{\abs{x-y}< 1\} \cap (A\times C)}  \frac1{\abs{x-y}^{1+s}}\,dx\,dy \,\,\,\le\,\,\,
\int\hskip -30pt\int\limits_{\{\abs{x-y}< 1\} \cap (A\times C)}  \frac1{\abs{x-y}^{1+s'}}\,dx\,dy. $$
Thus  \eqnref{domconv0} holds. 
\goodbreak

Next, we prove \eqnref{conv0}. 
Since $A$ has finite perimeter, it is the disjoint union of finitely many intervals lying at mutually positive distance up to a set of measure zero (cf.~\cite[Proposition~12.13]{Maggi}). 
Hence, w.l.o.g., $A= \bigcup_{i=1}^m I_i$, where $I_i=(a_i,b_i)$. Set $J_j= [b_j, a_{j+1}]$ for $j=1,\dots, m-1$ and $J_0=(-\infty, a_1)$ and $J_m=(b_m,\infty)$. We have
\begin{equation}\label{sum0}
P_s(A)=
 \sum_{j=0}^m \sum_{i=1}^m \int\limits_{J_j} \int\limits_{I_i}  \frac{1}{\abs{x-y}^{s+1}}  \, dx\,dy .
\end{equation}
A simple calculation shows that
\begin{equation*}
\lim_{s\to0^+} s \, \int_{J_j} \int_{I_i}  \frac{1}{\abs{x-y}^{s+1}}  \, dx\,dy =\Vol{I_i}
\end{equation*}
for $j=0$ and $j=m$ and
\begin{equation*}
\lim_{s\to0^+} s\,\int_{J_j} \int_I  \frac{1}{\abs{x-y}^{s+1}}  \, dx\,dy =0
\end{equation*}
for $j=1, \dots, m-1$.
Hence we obtain \eqnref{conv0} from \eqnref{sum0}.
\end{proof}

\section{Limiting Behavior of Fractional Perimeters}\label{lb}

Let $K\subset\rn$ be an origin-symmetric convex body.

\begin{theorem}\label{perimeter}
If $\,E\subset\rn$ is a bounded Borel set of finite perimeter, then
\begin{equation*}\label{perlim}
(1-s)\,P_s(E, K) \to P(E, \om K)
\end{equation*}
as $s\to 1^-$.
\end{theorem}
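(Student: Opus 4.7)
The plan is to apply the Blaschke-Petkantschin formula \eqnref{BP} so as to reduce $P_s(E,K)$ to an integral of one-dimensional fractional perimeters over the affine Grassmannian, and then to invoke Lemma \ref{one} slicewise. Writing $P_s(E,K)=\int_\rn\int_\rn \one_E(x)\one_{E^c}(y)\,\norm{x-y}_K^{-(n+s)}\,dH^n(x)\,dH^n(y)$ and observing that for two points $x,y$ on a line $L\in\affn$ with direction $u(L)\in\sn$ one has $\norm{x-y}_K=\abs{x-y}\,\norm{u(L)}_K$, the kernel factor $\abs{x-y}^{n-1}/\norm{x-y}_K^{n+s}$ splits as $\norm{u(L)}_K^{-(n+s)}\,\abs{x-y}^{-(1+s)}$. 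Combining \eqnref{BP} with the parameterization \eqnref{ag} then yields
\begin{equation*}
P_s(E,K)=\frac12\int_\sn\frac{1}{\norm{u}_K^{n+s}}\int_{u^\bot} P_s\bigl(E\cap(y+L_u)\bigr)\,dH^{n-1}(y)\,dH^{n-1}(u),
\end{equation*}
the inner $P_s$ being the one-dimensional fractional perimeter on $y+L_u\cong\R$.

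For $H^{n-1}$-a.e.\ pair $(u,y)$ the slice $A=E\cap(y+L_u)$ is a bounded subset of the line of finite perimeter with $\rbd A=\rbd E\cap(y+L_u)$, by the standard slicing theory for sets of finite perimeter, and by Lemma \ref{one} $(1-s)\,P_s(A)\to\card(\rbd E\cap(y+L_u))$ as $s\to 1^-$. Passing this pointwise limit inside the double integration, applying \eqnref{proj} to rewrite the $y$-integral as $\int_{\rbd E}\abs{u\cdot\un_E(x)}\,dH^{n-1}(x)$, and swapping the order of integration, I obtain
\begin{equation*}
\lim_{s\to 1^-}(1-s)P_s(E,K)=\int_{\rbd E}\left(\frac12\int_\sn\frac{\abs{u\cdot\un_E(x)}}{\norm{u}_K^{n+1}}\,dH^{n-1}(u)\right)dH^{n-1}(x).
\end{equation*}
A polar-coordinates calculation ($z=ru$, $dz=r^{n-1}dr\,dH^{n-1}(u)$, with $r$ running up to $1/\norm{u}_K$) transforms the inner spherical integral into $\tfrac{n+1}{2}\int_K\abs{\un_E(x)\cdot z}\,dz$, which equals $\norm{\un_E(x)}_{\omd K}$ by \eqnref{moment}. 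Since $P(E,\om K)=\int_{\rbd E}\norm{\un_E(x)}_{\omd K}\,dH^{n-1}(x)$, the desired identification is complete.

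The main obstacle is justifying the interchange of the limit with the double integration. For this, Lemma \ref{one} supplies the uniform-in-$s$ bound $(1-s)P_s(A)\le 8\,\card(\rbd A)\max\{1,\diam(E)\}$ valid on $[1/2,1)$, while the weight satisfies $\norm{u}_K^{-(n+s)}\le c^{-(n+1)}$ for some $c>0$ depending only on $K$. The resulting majorant is integrable because, again by \eqnref{proj},
\begin{equation*}
\int_\sn\int_{u^\bot}\card\bigl(\rbd E\cap(y+L_u)\bigr)\,dH^{n-1}(y)\,dH^{n-1}(u)=\int_\sn\int_{\rbd E}\abs{u\cdot\un_E(x)}\,dH^{n-1}(x)\,dH^{n-1}(u),
\end{equation*}
which is bounded by $H^{n-1}(\sn)\,P(E)<\infty$. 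Dominated convergence then legitimates the exchange of limit and integration and completes the proof.
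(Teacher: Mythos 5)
Your proposal is correct and follows essentially the same route as the paper's proof: Blaschke--Petkantschin to reduce to one-dimensional slices, Lemma \ref{one} for the slicewise limit and the uniform bound \eqnref{domconv} justifying dominated convergence, the slicing identity $\rbd E\cap L=\rbd(E\cap L)$ together with \eqnref{proj}, and finally the polar-coordinate identification of $\tfrac12\int_\sn\abs{u\cdot\un_E(x)}\norm{u}_K^{-(n+1)}\,dH^{n-1}(u)$ with $\norm{\un_E(x)}_{\omd K}$. Your write-up simply makes the integrable majorant and the moment-body computation more explicit than the paper does.
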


\begin{proof}
By the Blaschke-Petkantschin formula \eqnref{BP},
\begin{align*}
\int\limits_E\int\limits_{E^c}&\frac1{\norm{x-y}_K^{n+s}} \,dx\,dy \\
&= 
\int\limits_{E\cap L\ne \emptyset} \frac1{\norm{u(L)}_K^{n+s}} \int\limits_{E\cap L} \int\limits_{E^c\cap L} \frac1{\abs{x-y}^{s+1}} \,dH^1(x)\,dH^1(y)\,dL.
\end{align*}
Let $L_u=\{\lambda\, u: \lambda\in\R\}$. The sets $E\cap L$ have for $L=L_u +y$ for a.e.\ $y\in u^\bot$ finite perimeter (cf.\ \cite[Proposition 14.5]{Maggi}).
Hence we obtain by the Dominated Convergence Theorem, which can be used  because of \eqnref{domconv},  and by Lemma \ref{one} that
$$\lim_{s\to1^-} (1-s)
\int\limits_E\int\limits_{E^c}\frac1{\norm{x-y}_K^{n+s}} \, dx\,dy =
\int\limits_{E\cap L\ne \emptyset} \frac{\card(\rbd E\cap L)}{\norm{u(L)}_K^{n+1}}\,dL.$$
Since $\rbd E \cap L= \rbd (E\cap L)$ for a.e.\ line $L$ (cf.\ \cite[Theorem 18.11 and Remark 18.13]{Maggi}) and by the definition of the measure on the affine Grassmannian \eqnref{ag}, we get 
\begin{align*}
\int\limits_{E\cap L\ne \emptyset}&\frac{\card(\rbd E\cap L)}{\norm{u(L)}_K^{n+1}} \,dL\\
&=\frac12
\int\limits_{\sn} \int\limits_{E| u^\bot} \frac{\card(\rbd E \cap (y+ L_u))}{\norm{u}_K^{n+1}} \,dH^{n-1}(y)\,dH^{n-1}(u),
\end{align*}
where $L_u=\{\lambda\, u: \lambda\in\R\}$.
By \eqnref{proj}, Fubini's Theorem and the definition \eqnref{moment} of the moment body of $K$, we conclude that
\begin{align*}\lim_{s\to1^-} (1-s)
&\int\limits_E\int\limits_{E^c}\frac1{\norm{x-y}_K^{n+s}}\, dx\,dy \\
&=\frac12\int\limits_{\sn}\int\limits_{\rbd E} \frac{\abs{u\cdot \un_E(x)}}{\norm{u}_K^{n+1}}\, dH^{n-1}(x)\,dH^{n-1}(u)\\
&=\int\limits_{\rbd E} \norm{\nu_E(x)}_{\omd K}\, dH^{n-1}(x).
\end{align*}
The last term is the anisotropic perimeter of $E$ with respect to $\om K$.
\end{proof}

As a consequence, we obtain the following result. Combined with Theorem \ref{perimeter} we obtain Gamma-convergence of $(1-s) P_s(\cdot, K)$ to $P(\cdot, \om K)$ as $s\to 1^-$.

\begin{corollary}\label{gammainf}
Let $\,E_i, E\subset\rn$ be bounded Borel sets of finite peri\-meter. If $s_i\to 1^-$ and $E_i\to E$ as $i\to\infty$, then
\begin{equation*}\label{perliminf}
\liminf_{i\to\infty} \,(1-s_i)\,P_{s_i}(E_i, K) \ge  P(E, \om K).
\end{equation*}
\end{corollary}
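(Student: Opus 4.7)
The plan is to mirror the proof of Theorem \ref{perimeter}: apply the Blaschke-Petkantschin formula to disintegrate $(1-s_i) P_{s_i}(E_i, K)$ along lines, then pass to the liminf via Fatou's lemma, invoking the one-dimensional liminf estimate (Lemma \ref{gamma}) on almost every line. The main obstacle is that $L^1(\rn)$-convergence of $E_i$ to $E$ does not a priori yield $L^1(L)$-convergence of the sections $E_i \cap L$ to $E \cap L$ for $dL$-a.e.\ line $L$, which is what Lemma \ref{gamma} requires fiber-wise; this will be handled by a standard subsequence extraction.

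First, exactly as in the proof of Theorem \ref{perimeter}, the identity $\norm{x-y}_K = \abs{x-y}\,\norm{u(L)}_K$ for $x,y\in L$ together with \eqnref{BP} yields
$$
(1-s_i)\,P_{s_i}(E_i, K) = \int_{\affn} \frac{(1-s_i)\,P_{s_i}(E_i \cap L)}{\norm{u(L)}_K^{n+s_i}}\,dL,
$$
where $P_{s_i}(E_i \cap L)$ denotes the one-dimensional fractional $s_i$-perimeter of the section. Since it suffices to establish the desired inequality along an arbitrary subsequence, fix one realizing the liminf. Using Fubini's theorem via the parametrization \eqnref{ag} of $\affn$, the convergence $\one_{E_i} \to \one_E$ in $L^1(\rn)$ gives $\int_L \abs{\one_{E_i} - \one_E}\,dH^1 \to 0$ in $L^1(dL)$, and hence along a further subsequence this convergence holds for $dL$-a.e.\ $L \in \affn$; in particular, $E_i \cap L \to E \cap L$ in the sense of Lemma \ref{gamma} for a.e.\ $L$.

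On each such line, Lemma \ref{gamma} gives $\liminf_i (1-s_i)\,P_{s_i}(E_i \cap L) \ge \card(\rbd(E \cap L))$, and combined with the continuity $\norm{u(L)}_K^{n+s_i} \to \norm{u(L)}_K^{n+1}$, Fatou's lemma applied to the nonnegative integrand above yields
$$
\liminf_{i \to \infty} (1-s_i)\,P_{s_i}(E_i, K) \ge \int_{\affn} \frac{\card(\rbd(E \cap L))}{\norm{u(L)}_K^{n+1}}\,dL.
$$
The right-hand side equals $P(E, \om K)$ by the same integral-geometric computation concluding the proof of Theorem \ref{perimeter}, via $\rbd E \cap L = \rbd(E \cap L)$ for a.e.\ $L$, the parametrization \eqnref{ag}, the projection identity \eqnref{proj}, and the definition \eqnref{moment} of the moment body.
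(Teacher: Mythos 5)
Your proposal is correct and follows essentially the same route as the paper: Blaschke--Petkantschin to disintegrate along lines, Fatou's lemma together with the one-dimensional liminf bound of Lemma \ref{gamma} on almost every line, and the integral-geometric identification of the limit with $P(E,\om K)$ as in Theorem \ref{perimeter}. The only difference is that you spell out the subsequence extraction needed to pass from $L^1(\rn)$-convergence of $\one_{E_i}$ to $dL$-a.e.\ convergence of the sections $E_i\cap L$, a point the paper leaves implicit.
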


\begin{proof}
By the Blaschke-Petkantschin formula \eqnref{BP}, Fatou's lemma and Lemma \ref{gamma},
\begin{align*}
\liminf_{i\to\infty}\, (1-s_i)\,&\int\limits_{E_i}\int\limits_{E_i^c}\frac1{\norm{x-y}_K^{n+s_i}} \,dx\,dy \\
&= \liminf_{i\to\infty} \int\limits_{E_i\cap L\ne \emptyset} 
\frac{ (1-s_i)\,P_{s_i}(E_i\cap L)}{\norm{u(L)}_K^{n+s_i}} \,dL\\
&\ge \int\limits_{E\cap L\ne \emptyset} \frac{\card(\rbd E \cap L)}{\norm{u(L)}_K^{n+1}}\,dL \,\,= \,\,P(E, \om K),
\end{align*}
where the last step is as in the proof of Theorem \ref{perimeter}.
\end{proof}

\goodbreak
The following theorem establishes the limiting behavior of anisotropic $s$-perimeters as $s\to 0$. Using the one-dimensional case of the result by Maz$'$ya \& Shaposhnikova \cite{MazyaShaposhnikova}, the theorem can also be derived for bounded Borel sets of finite $s'$-perimeters for all $s'\in(0,1)$.

\begin{theorem}\label{volume}
If $\,E\subset\rn$ is a bounded Borel set of finite perimeter, then
$$s\,P_s(E, K) \to n\, \Vol{K}\, \Vol{E}$$
as $s\to 0^+$.
\end{theorem}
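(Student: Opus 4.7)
The plan is to mirror the proof of Theorem \ref{perimeter}: slice $E$ along affine lines using the Blaschke--Petkantschin formula \eqnref{BP} and invoke the one-dimensional limit supplied by Lemma \ref{two}. Since $\norm{x-y}_K=\abs{x-y}\,\norm{u(L)}_K$ whenever $x,y$ lie on a line $L\in\affn$ with unit direction $u(L)\in\sn$, \eqnref{BP} applied to $g(x,y)=\one_E(x)\one_{E^c}(y)\,\norm{x-y}_K^{-(n+s)}$ gives
\begin{equation*}
P_s(E,K)=\int_{\affn}\frac{P_s(E\cap L)}{\norm{u(L)}_K^{n+s}}\,dL,
\end{equation*}
where $P_s(E\cap L)$ is the one-dimensional fractional $s$-perimeter of $E\cap L$ inside $L$. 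By \cite[Proposition~14.5]{Maggi}, for $H^{n-1}\otimes H^{n-1}$-a.e.\ pair $(u,y)\in\sn\times u^\perp$ the slice $E\cap(y+L_u)$ is a bounded set of finite perimeter, and Lemma \ref{two} then yields $s\,P_s(E\cap L)\to 2\,\Vol{E\cap L}$ pointwise as $s\to0^+$.

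The main obstacle is producing an $s$-uniform dominator in order to exchange limit and integral. Fix any $s'\in(0,1/2)$; for $s\in(0,s')$ estimate \eqnref{domconv0} gives
\begin{equation*}
s\,P_s(E\cap L)\le 4\max\{1,\diam E\}+s'\diam(E)^2+s'\,P_{s'}(E\cap L).
\end{equation*}
After dividing by $\norm{u(L)}_K^{n+s}$, which is pinched between positive constants because $u(L)\in\sn$ and $K$ is a convex body, this produces a bound independent of $s$. The first two terms are integrable over the (bounded in $\affn$) set $\{L:E\cap L\ne\emptyset\}$ since $E$ is bounded, while for the third the Blaschke--Petkantschin formula yields
\begin{equation*}
\int_{\affn}\frac{P_{s'}(E\cap L)}{\norm{u(L)}_K^{n+s'}}\,dL=P_{s'}(E,K)<\infty,
\end{equation*}
the finiteness following from the bound $P_{s'}(E,K)\le c_1^{-(n+s')}P_{s'}(E)$ and the standard fact that a bounded set of finite perimeter has finite $s'$-perimeter.

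Dominated convergence then produces
\begin{equation*}
\lim_{s\to0^+} s\,P_s(E,K)=\int_{\affn}\frac{2\,\Vol{E\cap L}}{\norm{u(L)}_K^{n}}\,dL,
\end{equation*}
and unfolding the Haar measure via \eqnref{ag} together with Fubini's theorem converts this into
\begin{equation*}
\int_{\sn}\frac{1}{\norm{u}_K^n}\int_{u^\perp}\Vol{E\cap(y+L_u)}\,dH^{n-1}(y)\,dH^{n-1}(u)=\Vol{E}\int_{\sn}\frac{dH^{n-1}(u)}{\norm{u}_K^n}.
\end{equation*}
Finally, the polar-coordinate identity $\Vol{K}=\frac{1}{n}\int_{\sn}\norm{u}_K^{-n}\,dH^{n-1}(u)$ reduces the right-hand side to $n\,\Vol{K}\,\Vol{E}$, completing the proof.
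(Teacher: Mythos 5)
Your proposal is correct and follows essentially the same route as the paper: the Blaschke--Petkantschin formula \eqnref{BP} to reduce to one-dimensional slices, Lemma \ref{two} for the pointwise limit with \eqnref{domconv0} supplying the dominating function, and then \eqnref{ag}, Fubini and polar coordinates to identify the limit as $n\,\Vol{K}\,\Vol{E}$. You are in fact somewhat more explicit than the paper about why the dominator is integrable (via $\int_{\affn}P_{s'}(E\cap L)\,dL=P_{s'}(E)<\infty$), which is a welcome addition rather than a deviation.
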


\begin{proof}We proceed as in the proof of Theorem \ref{perimeter}. The Blaschke-Petkantschin formula \eqnref{BP} implies that
\begin{align*}
\int\limits_E\int\limits_{E^c}&\frac1{\norm{x-y}_K^{n+s}}\, dx\,dy \\
&=
\int\limits_{E\cap L\ne \emptyset} \frac1{\norm{u(L)}_K^{n+s}}\,\int\limits_{E\cap L} \int\limits_{E^c\cap L} \frac1{\abs{x-y}^{s+1}} \,dH^1(x)\,dH^1(y)\,dL.
\end{align*}
The sets $E\cap L$ have for $L=L_u +y$ for a.e.\ $y\in u^\bot$ finite perimeter (cf.~\cite[Proposition 14.5]{Maggi}).
Hence we obtain by the Dominated Convergence Theorem, which can be used  because of \eqnref{domconv0},  and Lemma \ref{two} that
$$\lim_{s\to0^+} s\,
\int\limits_E\int\limits_{E^c}\frac1{\norm{x-y}_K^{n+s}}\, dx\,dy =
2\!\!\!\int\limits_{E\cap L\ne \emptyset}  \frac{\Vol{ E\cap L}}{\norm{u(L)}_K^{n}}\, dL.$$
By the definition of the measure on the affine Grassmannian \eqnref{ag} and the polar coordinate formula for volume, we get  
\begin{eqnarray*}
2\!\!\!\int\limits_{E\cap L\ne \emptyset}  \frac{\Vol{E\cap L}}{\norm{u(L)}_K^{n}} \,dL&=&
\int\limits_{\sn} \int\limits_{E\cap u^\bot}  \frac{\Vol{E \cap (L_u+y)}}{\norm{u}_K^{n}} \,dH^{n-1}(y)\,dH^{n-1}(u)\\
&=&
\Vol{E}\!\!\int\limits_{\sn}  \frac1{\norm{u}_K^{n}} \,du\\
&=& n\,\Vol{K}\,\Vol{E}.
\end{eqnarray*}
This concludes the proof of the theorem.\end{proof}

\section{Fractional Isoperimetric Inequalities}\label{isoperi}

The next theorem shows that minimizers of the anisotropic fractional $s$-isoperimetric inequality with respect to $K$ converge as $s\to 1^-$ to minimizers of the  anisotropic isoperimetric inequality with respect to $\om K$.
\goodbreak

\begin{theorem}
Let $E_{s_i}\subset\rn$ be  bounded Borel sets such that
$$P_{s_i}(E_{s_i},K)=\gamma_{s_i}(K)\,\Vol{E_{s_i}}^{\frac{n-s_i}{n}}$$
and let $E_1\subset \rn$ be a bounded Borel set.  
If $s_i\to 1^-$ and $E_{s_i} \to E_1$  as $i\to \infty$, then there exists $c\ge 0$ such that
$E_1 = c \om K$ up to a set of measure zero.
\end{theorem}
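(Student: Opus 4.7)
The plan is to obtain matching upper and lower bounds on $(1-s_i)\,P_{s_i}(E_{s_i}, K)$ and then identify $E_1$ through the equality case of the anisotropic isoperimetric inequality \eqref{minkowski}. First, using the scaling $P_s(\lambda E, K) = \lambda^{n-s} P_s(E, K)$ together with the translation invariance of $P_s(\cdot,K)$, I would normalize the minimizers so that $\Vol{E_{s_i}} = \Vol{\om K}$ and their centroids sit at the origin; the $L^1$-convergence $E_{s_i} \to E_1$ then forces $\Vol{E_1} = \Vol{\om K}$.

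For the upper bound, I would use the convex body $\om K$ itself as a competitor in the definition of $\gamma_{s_i}(K)$, giving
\begin{equation*}
(1-s_i)\, P_{s_i}(E_{s_i}, K) = (1-s_i)\, \gamma_{s_i}(K)\, \Vol{\om K}^{(n-s_i)/n} \le (1-s_i)\, P_{s_i}(\om K, K).
\end{equation*}
By Theorem \ref{perimeter} the right-hand side tends to $P(\om K, \om K)$, and since $\om K$ is trivially homothetic to itself, the equality case of \eqref{minkowski} yields $P(\om K, \om K) = n\,\Vol{\om K}$. Hence $\limsup_{i\to\infty} (1-s_i)\, P_{s_i}(E_{s_i}, K) \le n\,\Vol{\om K}$.

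For the matching lower bound, Corollary \ref{gammainf} applied to $E_{s_i} \to E_1$ gives
\begin{equation*}
\liminf_{i\to\infty} (1-s_i)\, P_{s_i}(E_{s_i}, K) \ge P(E_1, \om K).
\end{equation*}
Combining the two bounds, $P(E_1, \om K) \le n\,\Vol{\om K} = n\,\Vol{\om K}^{1/n}\, \Vol{E_1}^{(n-1)/n}$. The anisotropic isoperimetric inequality \eqref{minkowski} provides the reverse inequality, so equality holds, and Taylor's characterization of the equality case forces $E_1$ to be homothetic to $\om K$ up to a set of measure zero. Combined with the centering normalization and the origin-symmetry of $\om K$, this yields $E_1 = c\,\om K$ for some $c \ge 0$.

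The step I expect to require the most care is the translation normalization: because $P_s(\cdot, K)$ is translation-invariant, the minimizers $E_{s_i}$ are only determined up to translation, so a priori one only concludes that $E_1$ is a translate of $c\,\om K$, and the stated conclusion uses centered representatives together with the origin-symmetry of $\om K$ inherited from $K$. A secondary technical issue is that the volume factor $\Vol{E_{s_i}}^{(n-s_i)/n}$ must be tracked through the limit; this is harmless because $\Vol{E_{s_i}} \to \Vol{E_1} = \Vol{\om K}$ implies $\Vol{E_{s_i}}^{(n-s_i)/n} \to \Vol{\om K}^{(n-1)/n}$, so both the $\limsup$ and $\liminf$ computations go through cleanly.
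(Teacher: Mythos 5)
Your proposal is correct and follows essentially the same route as the paper: compare the minimizer against the competitor $\om K$ via Theorem \ref{perimeter} for the upper bound, use Corollary \ref{gammainf} for the lower bound, and invoke the equality case of \eqnref{minkowski}; the paper merely phrases this as a contradiction argument rather than as matching bounds. Your concern about translations is legitimate but equally present in the paper's own proof (Taylor's equality case only gives that $E_1$ is \emph{homothetic} to $\om K$), and note that re-centering the given sets $E_{s_i}$ is not quite licit since the hypothesis fixes that this particular sequence converges to $E_1$; also, the degenerate case $\Vol{E_1}=0$ should be dispatched separately with $c=0$ before normalizing volumes.
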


\begin{proof}
If $E_1$ has measure zero, the statement is true for $c=0$. So, w.l.o.g., let $\Vol{E_1}=\Vol{\om K}$. Assume that $E_1$ is not a multiple of $\om K$ (up to a set of measure zero). Hence, by the equality case of the generalized Minkowski inequality \eqnref{minkowski} and Corollary \ref{gammainf}, we 
have
\begin{eqnarray*}
n\,\Vol{\om K} &<& P(E_1, \om K) \\
&\le& \liminf_{i\to \infty} \,(1-s_i)\, P_{s_i}(E_{s_i}, K)\\
&=& \liminf_{s\to 1^-} (1-s) \,\gamma_s(K)\, \Vol{\om K}^{\frac{n-s}n}.
\end{eqnarray*}
But 
\begin{eqnarray*}
\liminf_{s\to 1^-} (1-s)\, \gamma_s(K)\, \Vol{\om K}^{\frac{n-s}n} &\le&
\liminf_{s\to1^-} (1-s) P_s(\om K, K)\\
&=& P(\om K, \om K)\\
&=& n\, \Vol{\om K}.
\end{eqnarray*}
This is a contradiction. Thus $E_1$ is (up to a set of measure zero) a multiple of $\om K$.
\end{proof}
\goodbreak

\section{Preliminaries on Fractional Sobolev Norms}\label{norms}

For a function $f\in L^{1}(\rn)$ and $0<s<1$,  Gagliardo \cite{Gagliardo} 
introduced the fractional Sobolev $s$-seminorm of $f$ as
\begin{equation}\label{gagl}
\norm{f}_{W^{s,1}}= \int_\rn\int_\rn \frac{\abs{f(x)-f(y)}}{\abs{x-y}^{n+s}} \,dx\,dy.
\end{equation}
Extending a result by Bourgain, Brezis \& Mironescu \cite{BourgainBrezisMironescu} from $W^{1,1}(\rn)$ to $BV(\rn)$,  D\'avila~\cite{Davila} proved that for $f\in BV(\rn)$,
\begin{equation}\label{dav}
\lim_{s\to1^-} (1-s)  \norm{f}_{W^{s,1}} = 2 \,\alpha_{n}\, \norm{ f}_{BV},
\end{equation}
where $\alpha_{n}$ is the constant from \eqnref{davila},  the vector valued Radon measure $Df$ is the weak gradient of $f$, 
and $\norm{ f}_{BV}$ is the total variation of $Df$ on $\rn$. Note that 
\begin{equation}\label{bvnorm}
\norm{ f}_{BV}=  \int_{\rn} \babs{\frac{Df}{\abs{Df}}}\, d\abs{Df},
\end{equation}
where the vector $Df/\abs{Df}$ is the Radon-Nikodym derivative of $Df$ with respect to the total variation $\abs{Df}$ of $Df$. Also note that
\begin{equation}\label{coareaBV}
\norm{ f}_{BV}= \int_{0}^{\infty} P(\{\abs{f}>t\})\,dt
\end{equation}
by the coarea formula on $BV(\rn)$ (cf.\ \cite[Theorem 3.40]{AmbrosioFuscoPallara}).
\goodbreak

An anisotropic Sobolev seminorm on $BV(\rn)$ is defined by replacing the Euclidean norm  by an arbitrary norm in \eqnref{bvnorm}. For $K$ an origin-symmetric convex body in $\rn$, we set
\begin{equation*}\label{asobnorm} 
\norm{ f}_{BV, K} = \int_{\rn} \bnorm{\frac{Df}{\abs{Df}}}_{K^*}\, d\abs{Df}.
\end{equation*}
Note that 
\begin{equation}\label{acoarea} 
\norm{ f}_{BV, K} = \int_{0}^{\infty} P(\{\abs{f}>t\}, K)\, dt
\end{equation}
by the coarea forula on $BV(\rn)$ (cf.\ \cite[(2.22)]{FigalliMaggiPratelli10}).
Define the anisotropic fractional $s$-seminorm as 
\begin{equation*}
\int_\rn\int_\rn \frac{\abs{f(x)-f(y)}}{\norm{x-y}_K^{n+s}} \,dx\,dy,
\end{equation*}
where $K$ is an origin-symmetric convex body in $\rn$.

Visintin \cite{Visintin_siam} pointed out that as a consequence of Fubini's theorem a generalized coarea formula for fractional perimeters can be established. If $f\in L^1(\rn)$, then
\begin{equation*}
\int_{\rn}\int_{\rn} \frac{\abs{f(x)-f(y)}}{\norm{x-y}_K^{n+s}} \,dx\,dy= 
 2 \int_{-\infty}^\infty P_s(\{{f}>t\},K)\,dt
\end{equation*}
(or see \cite[Lemma 10]{AmbrosioDePhilippisMartinazzi}). 
If $K$ is origin-symmetric, then $P_s(E,K)= P_s(E^c,K)$ for all Borel sets $E\subset \rn$. Since $\Vol{\{f=t\}}=0$ a.e.\ on $\R$, we have $P_s(\{f\le -t\},K)= P_s(\{f< -t\}, K\})$ a.e.\ on $\R$. Hence 
\begin{equation}\label{visintin2}
\begin{array}{l}
\displaystyle\int_{\rn}\int_{\rn} \frac{\abs{f(x)-f(y)}}{\norm{x-y}_K^{n+s}} \,dx\,dy\\[12pt]
\displaystyle = 
 2 \int_{0}^\infty P_s(\{{f}>t\},K)\,dt +  2 \int_{0}^\infty P_s(\{{f}\le -t\},K)\,dt\\[12pt]
\displaystyle = 
 2 \int_{0}^\infty P_s(\{\abs{f}>t\},K)\,dt .
\end{array}
\end{equation}
In Section \ref{sobineq}, we make use of the Minkowski inequality for integrals in the following form: If $g: \rn\times \R\to [0,\infty)$ is measurable and $r>1$, then
\begin{equation}\label{MinkIneq}
\int_\R \big( \int_\rn g(x,t)^r\,dx\big)^\frac1r dt\ge \Big( \int_\rn \big( \int_\R g(x,t)\,dt\big)^r dx\Big)^\frac1r,
\end{equation}
If both sides are finite, there is equality if and only if $g(x,t)=\phi(x)\,\psi(t)$ a.e. with $\phi, \psi$ non-negative and measurable (cf.~\cite[(6.13.9)]{LittlewoodPolyaHardy}).

\section{Limits of Fractional Sobolev Norms}

For functions of bounded variation, we obtain the following analogue of the result \eqnref{dav} by D\'avila. 
Let $K\subset\rn$ be an origin-symmetric convex body.

\begin{theorem}\label{sobo}
If $f\in BV(\rn)$ has compact support, then
\begin{equation}\label{sobolim}
(1-s)\,\int_{\rn}\int_{\rn} \frac{\abs{f(x)-f(y)}}{\norm{x-y}^{n+s}_{K}} \,dx\,dy\to 2 \, \BVZK{f}
\end{equation}
as $s\to 1^-$.
\end{theorem}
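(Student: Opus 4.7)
The plan is to use the generalized coarea formula \eqnref{visintin2} to reduce the statement to a pointwise application of Theorem \ref{perimeter} at each superlevel set of $\abs{f}$, and then to recombine using the anisotropic coarea formula \eqnref{acoarea} applied with $\om K$ in place of $K$. Setting $E_t := \{\abs{f} > t\}$, formula \eqnref{visintin2} rewrites the left-hand side of \eqnref{sobolim} as
$$2(1-s)\int_0^\infty P_s(E_t, K)\,dt.$$
Since $f\in BV(\rn)$ also yields $\abs{f}\in BV(\rn)$, the coarea formula \eqnref{coareaBV} gives that $E_t$ has finite perimeter for a.e.\ $t>0$, and the compactness of $\operatorname{supp}(f)$, say $\operatorname{supp}(f)\subset B_R$, ensures each $E_t$ is bounded. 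Theorem \ref{perimeter} then supplies the pointwise convergence $(1-s) P_s(E_t, K)\to P(E_t, \om K)$ as $s\to 1^-$ for a.e.\ $t>0$. Since $\int_0^\infty P(E_t, \om K)\,dt = \BVZK{f}$ by \eqnref{acoarea}, it remains to interchange limit and integral.

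The main obstacle is producing a $t$-integrable dominating function for the family $\{(1-s)P_s(E_t, K)\}_{s\in[1/2,1)}$. I would construct it by repeating the Blaschke-Petkantschin reduction used to prove Theorem \ref{perimeter}, but inserting the explicit one-dimensional estimate \eqnref{domconv} of Lemma \ref{one}. Because $E_t\cap L\subset B_R$ for every affine line $L$, one has $\diam(E_t\cap L)\le 2R$, and Lemma \ref{one} gives
$$(1-s) P_s(E_t\cap L)\le 8\max\{1,2R\}\,\card(\rbd(E_t\cap L))$$
for $s\in[1/2,1)$. Inserting this into \eqnref{BP}, using a uniform lower bound $\norm{u}_K \ge c_1 > 0$ on $\sn$ to control $1/\norm{u(L)}_K^{n+s}$ uniformly in $s\in[1/2,1)$, and then applying \eqnref{ag} together with the identity $\rbd(E_t)\cap L=\rbd(E_t\cap L)$ for a.e.\ $L$ and the projection formula \eqnref{proj}, I obtain a bound of the form
$$(1-s)P_s(E_t, K)\le C_{R,K,n}\, P(E_t)$$
uniform in $s\in[1/2,1)$.

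Since $\abs{f}\in BV(\rn)$, the coarea formula \eqnref{coareaBV} gives $\int_0^\infty P(E_t)\,dt = \norm{\abs{f}}_{BV}<\infty$, so the Dominated Convergence Theorem applies and yields
$$\lim_{s\to 1^-} 2(1-s)\int_0^\infty P_s(E_t, K)\,dt = 2\int_0^\infty P(E_t, \om K)\,dt = 2\,\BVZK{f},$$
which is exactly \eqnref{sobolim}. Essentially the entire argument is a coarea reduction of Theorem \ref{perimeter}; the only step requiring genuine care is the uniform domination via the integral-geometric identity \eqnref{BP} and the one-dimensional bound of Lemma \ref{one}.
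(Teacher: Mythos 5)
Your proposal is correct and follows essentially the same route as the paper: reduce via the generalized coarea formula \eqnref{visintin2}, apply Theorem \ref{perimeter} pointwise in $t$, and justify the interchange of limit and integral by a uniform bound $(1-s)P_s(\{\abs{f}>t\},K)\le C\,P(\{\abs{f}>t\})$ obtained from Lemma \ref{one} through the Blaschke--Petkantschin reduction, with integrability of the dominating function supplied by \eqnref{coareaBV}. The paper's proof is the same argument with the same dominating bound, so there is nothing further to add.
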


\begin{proof}
By  the generalized coarea formula \eqnref{visintin2}, we obtain
$$\int_{\rn}\int_{\rn} \frac{\abs{f(x)-f(y)}}{\norm{x-y}_K^{n+s}} \,dx\,dy= 
2 \int_{0}^\infty P_s(\{\abs{f}>t\},K)\,dt.$$
By Lemma \ref{one} combined with the Blaschke-Petkantschin Formula \eqnref{BP} and the definition of the measure  on the affine Grassmannian \eqnref{ag}, we have
$$(1-s) P_s(E,K)\le 4 n \Vol{B} \max\{1,\diam(E)\}  \max\{1, \diam(K)\}^{n+1} P(E)$$
for $1/2\le s< 1$.
Hence 
\begin{equation}\label{vergl}
\begin{array}{ll}
 \displaystyle(1-s) &\displaystyle\!\!\!\int\limits_{0}^\infty P_s(\{\abs{f}>t\},K)\,dt \\
&\displaystyle\le \alpha(K) \max\{1,\diam(S)\} \int\limits_{0}^\infty P(\{\abs{f}>t\})\,dt,
\end{array}
\end{equation}
where $S$ is the support of $f$ and $\alpha(K)$ only depends on $K$. Since the function $f\in BV(\rn)$, the coarea formula \eqnref{coareaBV} implies that the right side of \eqnref{vergl} is finite.
Thus we conclude by the Dominated Convergence Theorem and Theorem \ref{perimeter} that
$$(1-s)\int_{\rn}\int_{\rn} \frac{\abs{f(x)-f(y)}}{\norm{x-y}_K^{n+s}} \,dx\,dy \to
 2 \int_{0}^\infty \!\!\!P(\{\abs{f}>t\},\om K)\,dt$$
as $s\to 1^-$. 
Combined with the coarea formula \eqnref{acoarea}, this concludes the proof of the theorem.
\end{proof}

\section{Fractional Sobolev Inequalities}\label{sobineq}

Let $K\subset\rn$ be an origin-symmetric convex body and $0<s<1$. Let $W^{s,1}(\rn)$ denote the set of  $f\in L^1(\rn)$ such that $\norm{f}_{W^{s,1}}< \infty$.

\goodbreak
\begin{theorem}\label{fracsob}
If $f\in W^{s,1}(\rn)$ has compact support, then
\begin{equation}\label{fracsobo}
\int\limits_{\rn} \int\limits_{\rn} \frac{\abs{f(x)-f(y)}}{\norm{x-y}_K^{n+s}}\,dx\,dy \ge 2\, \gamma_s(K)\, \Big(\int\limits_\rn \abs{f(x)}^{\frac n{n-s}} \,dx\Big)^{\frac {n-s}n}.
\end{equation}
There is equality in this inequality if and only if $f$ is a constant multiple of the indicator function of a minimizer of  \eqnref{afraciso}.
\end{theorem}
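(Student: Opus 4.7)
The plan is to follow the classical three-step layer-cake approach: coarea, then isoperimetric, then Minkowski's integral inequality. First I would apply the generalized coarea formula \eqnref{visintin2} to rewrite the left side of \eqnref{fracsobo} as $2\int_0^\infty P_s(\{\abs{f}>t\}, K)\,dt$. Since $f$ has compact support, each superlevel set $\{\abs{f}>t\}$ is bounded and Borel, and for a.e.\ $t>0$ has finite fractional $s$-perimeter because the left side is assumed finite. Applying the anisotropic fractional isoperimetric inequality \eqnref{afraciso} level-by-level then yields
$$\int_0^\infty P_s(\{\abs{f}>t\}, K)\,dt \ge \gamma_s(K)\int_0^\infty \Vol{\{\abs{f}>t\}}^{\frac{n-s}{n}}\,dt.$$

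Next I would set $r = n/(n-s) > 1$ and apply the Minkowski integral inequality \eqnref{MinkIneq} to $g(x,t) = \one_{\{\abs{f}>t\}}(x)$ on $\rn \times (0,\infty)$. Since $g$ is $\{0,1\}$-valued, $g^r = g$, so the left side of \eqnref{MinkIneq} becomes $\int_0^\infty \Vol{\{\abs{f}>t\}}^{1/r}\,dt$, while the inner integral on the right side is $\int_0^\infty \one_{\{\abs{f}>t\}}(x)\,dt = \abs{f(x)}$, producing $(\int_\rn \abs{f(x)}^{n/(n-s)}\,dx)^{(n-s)/n}$. Chaining these two estimates delivers \eqnref{fracsobo}.

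For the equality case, equality in \eqnref{MinkIneq} forces $g(x,t) = \phi(x)\psi(t)$ a.e. Because $g$ is $\{0,1\}$-valued and $\{\abs{f}>t\}$ is monotonically decreasing in $t$, the only way to achieve such a factorization is to have $\abs{f} = c\,\one_E$ a.e.\ for some $c \ge 0$ and some Borel set $E \subset \rn$. Equality in \eqnref{afraciso} for a.e.\ $t \in (0,c)$ then forces $E$ to be (up to a set of measure zero) a minimizer of the anisotropic fractional isoperimetric inequality. To upgrade $\abs{f} = c\,\one_E$ to $f = \pm c\,\one_E$, I would split $f = f_+ - f_-$ and observe that if both parts had positive measure then the coarea formula \eqnref{visintin2} decomposes the left side into two nonnegative contributions for which the factorization equality condition in \eqnref{MinkIneq} cannot hold simultaneously. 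The converse direction is immediate: if $f = c\,\one_E$ with $E$ a minimizer of \eqnref{afraciso}, then every step above is an equality.

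The principal obstacle will be the equality analysis, specifically the translation of the abstract Minkowski factorization $g(x,t) = \phi(x)\psi(t)$ into the statement that $\abs{f}$ is constant on its support, and then the handling of the sign of $f$ to conclude that $f$ itself (and not only $\abs{f}$) is a constant multiple of the indicator of a minimizer of \eqnref{afraciso}.
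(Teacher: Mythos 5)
Your proposal is correct and follows essentially the same route as the paper: the generalized coarea formula \eqnref{visintin2}, the level-by-level application of \eqnref{afraciso}, and the Minkowski integral inequality \eqnref{MinkIneq} with $g(x,t)=\one_{\{\abs{f}>t\}}(x)$ and $r=n/(n-s)$, followed by the equality condition of \eqnref{MinkIneq} for the characterization of extremizers. You are in fact somewhat more careful than the paper in the equality analysis, where the paper passes directly from the factorization $\one_{\{\abs{f}>t\}}(x)=\phi(x)\psi(t)$ to the conclusion that $f$ is a constant multiple of an indicator, without spelling out the monotonicity argument for $\abs{f}$ or the sign discussion.
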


\goodbreak

\begin{proof}
If  $f\in W^{s,1}(\rn)$ has compact support, then by the generalized coarea formula \eqnref{visintin2} we obtain that
$$\int_{\rn}\int_{\rn} \frac{\abs{f(x)-f(y)}}{\norm{x-y}_K^{n+s}} \,dx\,dy\\[4pt]
= 
 2 \int_{0}^\infty P_s(\{\abs{f}>t\},K)\,dt.
$$
Hence, the isoperimetric inequality \eqnref{afraciso} and  the Minkowski inequality for integrals \eqnref{MinkIneq} imply that
\begin{align*}
\int_{\rn}\int_{\rn} &\frac{\abs{f(x)-f(y)}}{\norm{x-y}_K^{n+s}} \,dx\,dy \\
&\ge 2\,  \gamma_s(K)\, \int_{0}^\infty \Vol{\{\abs{f}>t\}}^{\frac{n-s}n} \,dt \\
&=  2\,\gamma_s(K)\, \int_{0}^\infty \Big( \int_\rn \one_{\{\abs{f}>t\}} (x)\,dx \Big)^{\frac{n-s}n}\,dt\\
&\ge  2 \,\gamma_s(K)\, 
\Big(\int_\rn\Big( \int_{0}^\infty \one_{\{\abs{f}>t\}} (x) \,dt\Big)^{\frac n{n-s}} \,dx\Big)^{\frac {n-s}n}\\
&=  2\, \gamma_s(K)\, \Big(\int_\rn \abs{f(x)}^{\frac n{n-s}} \,dx\Big)^{\frac {n-s}n}.
\end{align*}
This concludes the proof of the inequality.

Suppose there is equality in \eqnref{fracsobo}.
By the equality condition of \eqnref{MinkIneq}, we have $\one_{\{\abs{f}>t\}} (x)=\phi(x)\,\psi(t)$ with non-negative measurable functions $\phi, \psi$. Thus $f$ is a constant multiple of an indicator function. Since there is equality in \eqnref{afraciso}, we obtain that $f$ is a constant multiple of the indicator function of a minimizer of  \eqnref{afraciso}. On the other hand, if $f= c\,\one_{E_s}$, where $E_s$ is a minimizer of \eqnref{afraciso}, then
$$\int_{\rn}\int_{\rn} \frac{\abs{\one_{E_s}(x)-\one_{E_s}(y)}}{\norm{x-y}_K^{n+s}} \,dx\,dy 
= 2\, P(E_s, K).$$
Hence there is equality in \eqnref{fracsobo} if and only if $E_s$ is a minimizer of \eqnref{afraciso}.
\end{proof}

\section{Acknowledgements}

The author would like to thank Tuo Wang and the referees for their helpful comments.

\footnotesize

\bigskip
\bigskip

\normalsize
\begin{samepage}
\noindent
Institut f\"ur Diskrete Mathematik und Geometrie\\
Technische Universit\"at Wien\\
Wiedner Hauptstr.\ 8-10/1046\\
1040 Wien, Austria\\
Email: monika.ludwig@tuwien.ac.at
\end{samepage}

\end{document}